\newlength{\defbaselineskip} \setlength{\defbaselineskip}{\baselineskip}
\theoremstyle{plain}
\newtheorem{thm}{Theorem}[section] 
 \newtheorem{prop}[thm]{Proposition}
\newtheorem{lemm}[thm]{Lemma}
\theoremstyle{remark} \newtheorem{rem}[thm]{Remark}
\theoremstyle{definition} \newtheorem{defi}[thm]{Definition}
\theoremstyle{definition} \newtheorem{ex}[thm]{Example}
\theoremstyle{definition} 
 \numberwithin{equation}{section}
\begin{document}
\author{Micha\l\ Kapustka}

\title{Geometric transitions between Calabi--Yau threefolds related to
Kustin-Miller unprojections}

\begin{abstract}
 We study Kustin-Miller unprojections between Calabi--Yau threefolds, or more
precisely the geometric transitions they induce. We use them to connect many
families of Calabi--Yau threefolds with Picard number one to the web of Calabi-
Yau complete intersections. This result enables us to find explicit description of a
few known families of Calabi--Yau threefolds in terms of equations. Moreover, we
find two new examples of Calabi--Yau threefolds with Picard group of rank one,
which are
described by Pfaffian equations in weighted projective spaces.
\end{abstract}

\maketitle
\begin{section}{Introduction} In this paper a Calabi--Yau threefold $X$ is a
smooth projective threefold with trivial canonical divisor and vanishing
cohomology $H^1(X,\mathcal{O}_X)=H^2(X,\mathcal{O}_X)=0$. Such varieties are
intensely investigated in particular because of the crucial role they play in
superstring theory. The simplest example of such a threefold is a smooth quintic
in $\mathbb{P}^4$. There are however much more examples, making the general
theory of Calabi--Yau threefolds very complicated. One of the reasons is that
even the simplest example (for instance the quintic) may admit many
degenerations whose resolution will again be a Calabi--Yau threefold. Such an
operation is called a geometric
transition. Restricting the singularities 
of the degenerate variety to be nodes,
one studies so-called conifold transitions. In \cite{Str}, Strominger gave an
interpretation 
of conifold transitions in the context of black hole condensations. Nowadays a
few more geometric
transitions have found a good interpretation in terms of
superstring theory (see \cite{KMP,BKK,BKKM}). 

In \cite{Gr2} Gross used geometric transition to formulate a version of the
so-called ``Reid fantasy'' \cite{RF} 
which doesn't need to go beyond the algebraic category. He conjectured that any
two Calabi--Yau threefolds might be connected by
sequences of geometric transitions. In Physical terms this statement is called
the Web conjecture, as it suggests that all Calabi--Yau threefolds
form a giant web, whose connections are geometric transitions.

So far it has been proven that all Calabi--Yau threefolds which are
complete intersections in products of projective spaces are connected using
geometric transitions (see \cite{CDLS}). 
Moreover, a large class of hypersurfaces in weighted 
projective four-space has also been connected to the web (see
\cite{CLyS,CGGK}). 
In the latter context the candidates for the transitions are constructed, very
naturally, by
intersecting the polytopes defining the Calabi--Yau varieties, and checking
whether the intersection is also a reflexive polytope. This construction in
geometrical terms may be interpreted as a composition of projections of the
degenerate Calabi--Yau threefold completed by a smoothing.

The aim of this paper is to study geometric transitions related with projections
(or equivalently but more adequately in our context with unprojections). More
precisely, under some additional assumptions, a
projection will induce a pair of geometric transitions which we shall call, to
avoid confusion, a geometric bitransition. 
This class of bitransitions, thanks 
to its naturality, may be much better understood, and
hence gives us a new variety of tools that may be used for explicit descriptions
of examples of Calabi--Yau threefolds. The theory of unprojections has already
proved its efficiency in classical algebraic geometry.
Thanks to works of Reid and Papadakis \cite{Reid 2,Pap,RP,Pap2,Pap3},
unprojections became a powerful tool to
describe and construct new varieties.
They are used in descriptions of singular K3 surfaces and Fano threefolds. In
this paper we would like to show how these tools work in the context of
Calabi--Yau threefolds. Differently from the approach to K3 surfaces or Fano 3
folds, we will not be interested in the singular varieties arising by
unprojection, but in the families of smooth
Calabi--Yau threefolds that might degenerate to such. The construction
shall give rise to geometric bitransitions between these families. Throughout
the
paper we study examples of such given geometric bitransitions. In this way we
find connections 
(consisting of an even number of geometric transitions)
between many known families of Calabi--Yau
threefolds with Picard number one. In particular we connect to the web the 5
families 
of Calabi--Yau threefolds introduced by Borcea \cite{Borc} and described as
complete
intersections in homogeneous spaces.   

The main advantage of our constructions is that at least in low codimension they
are well understood in terms of equations.
Thanks to that, for some known examples of Calabi--Yau threefolds we find
descriptions in terms of equations in weighted projective space. This fact enables us
to understand much better the geometry of those examples of Calabi--Yau
varieties,
which were known only as smoothings of some degenerate varieties. 

Moreover, in two of the cases the construction leads us to new Calabi--Yau
threefolds. They are
described by the vanishing of Pfaffians of some $5\times 5$ matrix with
polynomial
entries in some weighted projective spaces. These two families, together
with a few others explicitly described in this paper, have
conjectured Picard-Fuchs equations of their mirror (see \cite{TableVS}). We hope
one might use the results of this paper to find these mirrors. There are at
least two possibilities to proceed. The first is to use directly the constructed
geometric transitions to find candidates for the mirrors. This idea is based on
the conjecture by Morrisson (see \cite{Morglass}) which states that any
geometric transition
should admit a mirror transition going in the opposite direction. The second
would be based on using the explicit descriptions found to proceed
as in \cite{B}. In most cases one could also try to mimic the ideas of 
\cite{Rod} and \cite{Tjo}. The two new examples are additionally interesting
because the conjectured
 Picard-Fuchs equations of their mirrors admit
non-integral elliptic instanton numbers.

\end{section}
\begin{section}{Preliminaries}
Although as announced in the introduction throughout the paper the term Calabi--Yau threefold shall
concern only smooth projective manifold, for clarity of presentation we shall use also the term singular Calabi--Yau threefold as follows.
\begin{defi}
A normal Gorenstein variety $X$ is called a singular Calabi--Yau threefold if it has trivial dualizing 
sheaf and satisfy $H^1(X,\mathcal{O}_X)=H^2(X,\mathcal{O}_X)=0$. 
\end{defi}

Geometric transitions studied in this paper are defined as follows.
\begin{defi}
A geometric transition from a Calabi--Yau threefolds $X$ to a Calabi--Yau
threefold $Y$ is a pair consisting of a birational morphism 
$f:X\longrightarrow Z$ and a flat family over a disc with central fiber $Z$ and
some other fiber $Y$. 
Where $Z$ is a possibly (preferably) singular Calabi--Yau threefold. In this context
the latter deformation will be
called a smoothing of $Z$ or a degeneration of $Y$ depending on the direction
from which we look.     
\end{defi}

\begin{defi}
Two families $\mathcal{X}$, $\mathcal{Y}$ of Calabi--Yau threefolds are
connected by a geometric transition, if there exist
smooth elements $X \in \mathcal{X}$ and $Y\in \mathcal{Y}$ that are connected by
a geometric transition.   
\end{defi}

\begin{ex} A standard example is the following. Let $Z$ be a general quintic
containing a plane. It is a degeneration of 
a smooth quintic $Y$ which has 16 nodes. The blow up $X$ of $Z$ in the plane is
a smooth Calabi--Yau threefold. Then the considered 
blow up together with the deformation is a geometric transition connecting $X$
and $Y$.
\end{ex}
\begin{defi}
We shall say that two Calabi--Yau threefolds are connected by a geometric
bitransition if there exists a Calabi--Yau threefold $Z$ 
such that there are geometric transitions both from $Z$ to $X$ and from $Z$ to
$Y$.   
\end{defi}

\begin{ex} A standard example of bitransition is taken from \cite{Gr2}, and is the following. Consider a
quintic with a triple point. Blowing up this triple 
point is a resolution of singularities with a exceptional divisor a del Pezzo
surface of degree 3. The second extremal ray of the 
obtained variety $Z$ defines a map on a double octic threefold contracting
61 lines to nodes.  In this way we connect by a geometric bitransition the family
of quintic Calabi--Yau threefolds with the family of double octic Calabi--Yau threefolds (compare with case
15 in the Table of subsection \ref{sec codim 2}).
\end{ex}

\begin{rem}
Observe that with our definition two families of Calabi--Yau threefolds with
Picard number one cannot be connected by 
one geometric transition. This is because smooth varieties with Picard number
one admit no contraction morphisms. That is why we 
introduced the notion of bitransition as being the most natural way to connect
two Calabi--Yau threefold of Picard number one. 
One could alternatively consider a weaker definition of geometric transition
without assuming $f$ to be a morphism. Then two 
Calabi--Yau threefolds with Picard number one could a priori be connected by
such transitions, but unfortunately 
no such example is known to the author.      
\end{rem}

For a detailed introduction to the theory of geometric transitions see
\cite{Ros}.
Let us now recall some basic facts about Kustin--Miller unprojections (for more
details see \cite{Reid 2,RP}).
An unprojection is a birational map which is inverse to some projection.
We shall consider unprojections arising in the following construction.

Let  $D\subset X\subset \mathbb{P}^n$ be two projectively Gorenstein varieties
such that $D$ has codimension 1 in $X$.  
Let $\omega_X=O_X(k_X)$ and $\omega_D=O_D(k_D)$ be the dualizing sheaves of $X$
and $D$. Assume that $k_X>k_D$. 
Then by \cite[Thm 5.2]{Reid 2} there exists a section $s\in O_X(k_x-k_D)$ with
poles along $D$ which defines a 
birational map:
$$\varphi: X\dashrightarrow Y \subset\mathbb{P}^n[s]= \operatorname{Proj}
\mathbb{C}[x_0,\dots,x_n,s]$$  
 contracting $D$ to the point $P_s:=(0:\dots:0:1)$. Moreover $Y$ is also
projectively Gorenstein.
The inverse of $\varphi$ is the projection of $Y$ from the point $P_s$. 
\begin{defi}
The map $\varphi$ obtained above will be called a Kustin Miller unprojection
with exceptional divisor $D$ and unprojected variety $X$. 
\end{defi}

\begin{rem}
The above construction doesn't need the ambient space $\mathbb{P}^n$. In fact it
is often performed without any 
ambient space. In our context it will be convenient to have an ambient space
$T=\operatorname{Proj} R$ and then 
$Y\subset \operatorname{Cone}(T):=\operatorname{Proj} R[s]$     
\end{rem}
\begin{rem}\label{factorizes}
Observe that the projection inverse to $\varphi$ always factorizes through the
blow-up $Z$ of $P_s$ and a contraction morphism.
\end{rem}

\begin{ex}\label{Ax-By}
The standard example of Kustin--Miller unprojection is based on the famous Reid
$Ax-By$ trick. 
Let us recall it here following \cite[sec.2]{Reid 2}.
Let $T=\mathbb{P}^n(a_0,\dots,a_n)$ with coordinate system $x=(x_0,\dots,x_n)$.
Let $D=\{x\in T| A(x)=B(x)=0\}$ 
for some homogeneous polynomials $A$ and $B$ of degrees $d_1$ and $d_2$, and let
$X$ be a hypersurface containing $D$.
Then $X=\{x\in T| A(x)C(x)-B(x)D(x)=0\}$ with $C$ and $D$ homogeneous
polynomials of degrees $k_1$,$k_2$ 
such that $k_1+d_1=k_2+d_2$.   
Assume that $d_1>k_2$. Then we can define $Y=\{x\in
\operatorname{Proj}(\mathbb{C}[x_0,\dots,x_n,s])| C(x)s=A(x), D(x)s=B(x) \}$. 
Observe that $Y$ contains the point $P_s=(0:\dots:0:1)$, and that the image of
$Y\setminus \{P_s\}$ by the projection from
$P_s$ contains $X\setminus D$.  
\end{ex}

In this paper we shall consider the following construction
based on Kustin--Miller unprojections of del Pezzo surfaces in Calabi--Yau
threefolds.  Take a del Pezzo surface $D$
anti-canonically embedded in a (weighted) projective space, i.e.
in such a way that the restrictions of the hyperplanes from the
projective space is the complete anti-canonical system ($D$ may be
contained in some hyperplane). Consider moreover a proper flat family of
Calabi--Yau threefolds with Picard number one embedded in the same
projective space by the generators of their Picard groups. Assume that 
the elements of this family are projectively Gorenstein. Suppose
that in the considered family we find a singular Calabi--Yau threefold $X$ 
containing the surface $D$. As both $D$ and $X$ are projectively
Gorenstein, by Kustin-Miller there is a rational section $s\in O_X
(1)$ with poles along $D$. The closure of the graph of this
section (in the cone $\operatorname{Cone}(T)$) is a variety $Y$ birational to
$X$, and which is a singular Calabi--Yau threefold with isolated
singularity isomorphic to a cone over the anti-canonically embedded del Pezzo
surface $D$. For most del Pezzo surfaces (except for the Hirzebruch surface
$\mathbb{F}_1$) under some additional assumptions (see \cite{Gr2,namikawa1})
such singular
variety $Y$ can be smoothed to a Calabi--Yau threefold. Moreover, the smoothing
may then be performed in the same space
in which the singular variety is embedded. In fact, in our context we won't need
the existence of the smoothing as we will have an explicit description of the
singular variety leading us to a natural explicit smoothing.  As $X$ can also be
smoothed by assumption we obtain in this way a data consisting of a
degeneration 
followed by a birational map and a smoothing. Moreover, our birational map can
be factorized as in Remark \ref{factorizes}. Now provided that $Z$ 
is a Calabi--Yau threefold we obtain a geometric bitransition between two
families of Calabi--Yau threefolds.

In order to use the above construction to find explicit examples of geometric
transitions, we
first need to find a suitable embedding of a del Pezzo surface into a singular
Calabi--Yau threefold admitting a smoothing. Then, after having performed the unprojection, we need to
find an explicit smoothing of the obtained singular variety. To complete the
picture properly, we shall also need 
a crepant resolution of each of the considered singular varieties.

One can use the following lemma to construct a table of candidates
for the considered construction in terms of basic invariants.
Assume that we have two families $\mathcal{X}$ and $\mathcal{Y}$ of embedded
Calabi--Yau threefolds connected by an unprojection in the above sense.
Let $X$ and $Y$ be general elements of these families and $H_X$ and $H_Y$ their
respective hyperplane sections.
Let $\pi\colon X_0\rightarrow Y_0$ be the unprojection connecting the families.
It is a birational map between two singular Calabi--Yau threefolds,
$X_0\in\mathcal{X}$ and $Y_0\in \mathcal{Y}$ contracting a Del
Pezzo surface $D$ of degree $d$.
\begin{lemm} \label{niezmienniki} Under the above assumptions the invariants of
$X$ and $Y$ are
related by the following formulas:
\begin{itemize}
\item[(i)]$\dim (|H_Y|)=\dim (|H_X|)+1$
 \item[(ii)] $H_Y^3=H_X^3-d$
\item[(iii)] $c_2(Y).H_Y= c_2(X).H_X -12+2d$
\end{itemize}
\end{lemm}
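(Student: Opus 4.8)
The plan is to extract all three relations from a single geometric input: the birational map $\pi$ fits (after passing to the blow-up $Z$ of the point $P_s\in Y_0$, as in Remark \ref{factorizes}) into a diagram in which $Z\to Y_0$ contracts the exceptional divisor $E\cong D$ to a point, and $Z\to X_0$ contracts $E$ onto the del Pezzo surface $D\subset X_0$ along its anticanonical morphism. After crepant resolution and smoothing, $X$ and $Y$ are deformation equivalent to $X_0$ and $Y_0$, so their numerical invariants $H^3$ and $c_2\cdot H$ are computed on $Z$. The first thing I would do is set up this diagram carefully and identify $H_X$, $H_Y$ on $Z$: pulling back, $H_Y = \pi^*H_X - E$ (the unprojection variable $s$ has weight $1$, so projecting from $P_s$ drops the hyperplane class by the class of the contracted divisor), equivalently $\pi^*H_Y$ differs from $H_X$ by a multiple of $E$. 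This is the one structural fact everything else follows from.

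For (i), the linear system $|H_Y|$ on $Y_0\subset\operatorname{Cone}(T)$ is cut by linear forms in $x_0,\dots,x_n,s$, whereas $|H_X|$ on $X_0\subset T$ is cut by linear forms in $x_0,\dots,x_n$ alone; since $s$ is a genuinely new coordinate (it has a pole along $D$ and is not a combination of the $x_i$ on $X$), $\dim|H_Y| = \dim|H_X| + 1$. I would phrase this as: the unprojection ring is the old homogeneous coordinate ring with one new generator $s$ in degree $1$, so the degree-one graded piece grows by exactly one dimension.

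For (ii) and (iii) I would work on $Z$ with $\rho=2$, writing $N=H_X = g^*H_X$ and $M=H_Y = f^*H_Y$ (abusing notation for pullbacks) and $E$ the exceptional divisor, so that $M = N - E$ from the previous paragraph. Then $H_Y^3 = (N-E)^3 = N^3 - 3N^2E + 3NE^2 - E^3$. Because $g\colon Z\to X_0$ contracts $E$ to the surface $D$ with $g|_E$ the anticanonical map $D\to D\subset X_0\subset T$, and $N$ is the pullback of the ambient hyperplane, $N|_E = -K_D$ (the anticanonical class of the del Pezzo), while $E|_E = N_{E/Z}$; one computes $N^2\cdot E = (-K_D)^2 = d$, and the terms $N\cdot E^2$ and $E^3$ are evaluated from the normal bundle of $E$, using that $f$ contracts $E$ to a point (so $M\cdot E = 0$, i.e. $N|_E = E|_E$, giving $N\cdot E^2 = N^2\cdot E = d$ and $E^3 = d$). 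Substituting, $H_Y^3 = N^3 - 3d + 3d - d = H_X^3 - d$. For (iii) I would run the parallel computation with $c_2$: crepancy of the two contractions gives $K_Z = g^*K_{X_0} = f^*K_{Y_0}$, both trivial, and $c_2(Z)$ relates to $c_2$ of the two models with a correction supported on $E$; then $c_2(Y)\cdot H_Y - c_2(X)\cdot H_X = \bigl(c_2(Z)\cdot M - c_2(Z)\cdot N\bigr) + (\text{correction}) = -c_2(Z)\cdot E + (\text{correction})$, and $c_2$ restricted to the del Pezzo surface $E\cong D$ is governed by $c_2(D) = 12 - K_D^2 = 12 - d$ together with the adjunction/normal-bundle contribution, yielding the stated $-12 + 2d$.

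The main obstacle I expect is bookkeeping the intersection numbers on $Z$ correctly: one must pin down $N|_E$, $E|_E$, and the $c_1, c_2$ of the normal bundle $N_{E/Z}$ using only that $g|_E$ is the anticanonical morphism of a del Pezzo and $f|_E$ is the contraction to a point — in particular justifying $E^3 = d$ and the sign in the $c_2$ correction term. A clean way to finesse this is to invoke Riemann--Roch / the standard formula for a blow-down of a divisor to a point versus to a surface (the "$(-1,-1)$-type" or conifold-type contraction data recorded e.g. in \cite{namikawa1, Gr2}): the del Pezzo cone singularity has a small-type crepant structure whose discrepancy data is classical, so the three numbers $d$, $d$, $d$ and the Euler-characteristic shift $12-d$ can be read off without re-deriving them. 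Assembling these is then routine algebra.
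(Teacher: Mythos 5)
Your overall strategy --- doing all the intersection theory on the common crepant resolution $Z$ of $X_0$ and $Y_0$ --- is a legitimate alternative to the paper's one-line arguments, and your treatment of (i) agrees with the paper's (``follows directly from the assumption''). But the identity you call ``the one structural fact everything else follows from'' has the wrong sign, and the error propagates through (ii) and (iii). The projection $\pi^{-1}$ from $P_s$ is given by the hyperplanes of $\operatorname{Cone}(T)$ through $P_s$, so on the blow-up $Z$ of $P_s$ one has $g^*H_X=f^*H_Y-E$, i.e.\ $M=N+E$, not $M=N-E$; equivalently, Proposition \ref{projekcja jest zadana przez D+H} says the unprojection is given by $|\tilde{D}+\tilde{H}|$, so $H_Y$ pulls back to $\tilde{H}_X+\tilde{D}$. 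Your sign contradicts your own data: both contractions are crepant, so $K_Z=0$ and adjunction forces $E|_E=K_E=K_D$, which is anti-ample on the del Pezzo $E\cong D$; your relation $M=N-E$ together with $M|_E=0$ would instead force $E|_E=N|_E=-K_D$, which is ample. With the correct sign the intersection numbers are $N^2E=(-K_D)^2=d$, $NE^2=(-K_D)\cdot K_D=-d$ (you assert $+d$), $E^3=K_D^2=d$, and hence $H_Y^3=(N+E)^3=N^3+3d-3d+d=H_X^3+d$.

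So the computation, done correctly, yields $H_Y^3=H_X^3+d$ and then (via your $c_2$ bookkeeping, or more cheaply via Riemann--Roch as the paper does, i.e.\ $\chi(\mathcal{O}(H))=H^3/6+c_2\cdot H/12$ combined with (i) and the corrected (ii)) $c_2(Y)\cdot H_Y=c_2(X)\cdot H_X+12-2d$. This is what every table in the paper exhibits: $X_5\subset\mathbb{P}^4$ with $D_{2,2}$ of degree $4$ unprojects to $Y_{3,3}\subset\mathbb{P}^5$ of degree $9=5+4$, and in the cascade of degree-$4$ del Pezzo unprojections both $H^3$ and $c_2\cdot H$ increase by $4=12-2\cdot 4$ at each step while $\dim|H|$ increases by $1$. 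The lemma as printed evidently has the roles of $X$ and $Y$ interchanged in (ii) and (iii) relative to (i), and the paper's own proof sketch of (ii) (degree drops under projection from a point of multiplicity $d$) also gives $H_X^3=H_Y^3-d$. Your compensating sign slip makes the argument land exactly on the misprinted formulas, so the agreement with the statement is accidental rather than a verification. Separately, your item (iii) is only a sketch --- the ``correction supported on $E$'' is never computed --- whereas the Riemann--Roch route used in the paper gives (iii) with no further geometry once (i) and (ii) are in hand; I would recommend that route.
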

\begin{proof} Item (i) follows directly from the assumption. Item (ii) follows
from the fact that the preimage of a general linear section of
$X_0$ by $\pi^{-1}$ is a linear section of $Y_0$ passing through
the center of the projection $\pi^{-1}$ and that the degree is
equivariant under flat deformations. Item (iii) follows from the
Riemann-Roch theorem for threefolds.
\end{proof}

\begin{rem}
Observe that the invariants $H^3$ and $c_2.H$ are enough to determine whether
there exists a flat deformation between two Calabi--Yau threefolds (see
\cite[Thm. 7.4]{Lee}).
\end{rem}

\begin{rem}
The constructed table of candidates should be considered as giving us only hints
about what examples to look for. We will see however that it is very incomplete
for two reasons. The first is that: even if a pair of families appears to be a
good
candidate basing only on the invariants, it does not necessarily mean that we
can
directly construct a suitable unprojection between them. The second is that the
list of Calabi--Yau threefolds with Picard group of rank one is far to be
complete. Hence even if a Calabi--Yau threefold appears not to admit any
unprojection that connects it to other families using geometric transitions, it
does not mean that there is no connection at all which uses these constructions.
This
means only that we need an intermediate family that is not included in the
table. Moreover, as by unprojection we do not always end up in Calabi--Yau
threefolds with Picard group of rank one, this intermediate family needs not to
be of Picard number one.
\end{rem}

The constructions involving unprojections are very convenient tools to describe
already known varieties in terms of equations. For instance the following
proposition shows that most constructions contained in \cite{GMP1,G2,GMk} are in
fact unprojections in the sense studied in this paper.
\begin{prop}\label{projekcja jest zadana przez D+H}
Let $X\subset\mathbb{P}^n$ be a nodal Calabi--Yau threefold
containing a del Pezzo surface $D$ in its anti-canonical
embedding. Let $H$ be the hyperplane section of $X$. Let
$\tilde{X}$ be a small resolution of $X$ arising by blowing up $D$
and flopping the obtained lines. Then the morphism given by the
system $|\tilde{D}+\tilde{H}|$ on $\tilde{X}$ is the composition
of the blowing down with the unprojection map contracting $D$.
\end{prop}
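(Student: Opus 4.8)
The plan is to write down an explicit basis of $H^{0}(\tilde X,\mathcal{O}_{\tilde X}(\tilde D+\tilde H))$, check that the associated linear system is base-point free, and recognise the resulting morphism as the Kustin--Miller unprojection $\varphi\colon X\dashrightarrow Y\subset\mathbb{P}^{n}[s]$ precomposed with the blow-down. Throughout, $\pi\colon\tilde X\to X$ is the given small resolution, $\tilde H=\pi^{*}\mathcal{O}_{X}(1)$, $\tilde D$ the strict transform of $D$, $x_{0},\dots,x_{n}$ the coordinates of $\mathbb{P}^{n}$ (pulled back to sections of $\tilde H$), and $\mathbf{1}_{\tilde D}\in H^{0}(\tilde X,\mathcal{O}_{\tilde X}(\tilde D))$ the tautological section. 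Since $k_{X}=0$ and $k_{D}=-1$ (as $D$ sits in its anti-canonical embedding), $k_{X}>k_{D}$, so by \cite[Thm.~5.2]{Reid 2} (cf.\ Example~\ref{Ax-By}) there is a rational section $s\in\mathcal{O}_{X}(1)$ with a simple pole along $D$, which is exactly the extra coordinate of $\mathbb{P}^{n}[s]$ and which defines $\varphi$ contracting $D$ to $P_{s}=(0:\dots:0:1)$.

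The crucial preliminary is the local description of $\pi$. Away from the nodes of $X$ lying on $D$ the divisor $D$ is Cartier and $\pi$ is an isomorphism; near such a node the threefold is $\{xy=zw\}$ with, after a coordinate change, $D=\{x=z=0\}$, so blowing up $D$ replaces the node by a single exceptional line contained in the strict transform of $D$ (the latter being the blow-up of $D$ at that node), and flopping these $(-1)$-curves both produces $\tilde X$ and contracts them again inside the surface. Hence $\pi|_{\tilde D}\colon\tilde D\to D$ is an isomorphism, $\tilde D$ contains no $\pi$-exceptional curve, and (nodes being crepant) $K_{\tilde X}=0$. Adjunction then gives $\mathcal{O}_{\tilde X}(\tilde D)|_{\tilde D}=\omega_{\tilde D}$, while $\tilde H|_{\tilde D}=\pi^{*}(\mathcal{O}_{X}(1)|_{D})=\omega_{\tilde D}^{-1}$ by anti-canonicity; therefore the key identity $(\tilde D+\tilde H)|_{\tilde D}\cong\mathcal{O}_{\tilde D}$.

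Granting this, the remainder is formal. From $0\to\mathcal{O}_{\tilde X}(\tilde H)\to\mathcal{O}_{\tilde X}(\tilde H+\tilde D)\to\mathcal{O}_{\tilde D}\to0$, together with $H^{1}(\tilde X,\mathcal{O}_{\tilde X}(\tilde H))=0$ (Kawamata--Viehweg, as $\tilde H$ is nef and big and $K_{\tilde X}=0$) and $h^{0}(\tilde X,\mathcal{O}_{\tilde X}(\tilde H))=h^{0}(X,\mathcal{O}_{X}(1))=n+1$ (linear normality of $X\subset\mathbb{P}^{n}$), one gets $h^{0}(\tilde X,\mathcal{O}_{\tilde X}(\tilde D+\tilde H))=n+2$; a basis is $x_{0}\mathbf{1}_{\tilde D},\dots,x_{n}\mathbf{1}_{\tilde D}$ (which span the subspace of sections vanishing on $\tilde D$) together with $\tilde s:=(\pi^{*}s)\cdot\mathbf{1}_{\tilde D}$, which is regular because $s$ has only a simple pole along $D$ and whose restriction to $\tilde D$ — the Poincar\'e residue of $\pi^{*}s$ — is a nowhere-vanishing constant, being a nonzero section of $\mathcal{O}_{\tilde D}$. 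For base-point freeness: on $\tilde X\setminus\tilde D$ the sections $x_{i}\mathbf{1}_{\tilde D}$ have no common zero since $\bigcap_{i}\{x_{i}=0\}=\emptyset$, and on $\tilde D$ the section $\tilde s$ is nowhere zero. Finally, on the open set $\mathbf{1}_{\tilde D}\neq0$ the morphism $\psi$ defined by this basis equals $(x_{0}:\dots:x_{n}:s)\circ\pi=\varphi\circ\pi$, whereas $\psi$ sends $\tilde D$ to $(0:\dots:0:1)=P_{s}=\varphi(D)$; so $\psi=\varphi\circ\pi$, i.e.\ the morphism attached to $|\tilde D+\tilde H|$ is the blow-down followed by the unprojection contracting $D$ (equivalently, by Remark~\ref{factorizes}, the composition that factors through the blow-up of $P_{s}\in Y$), and its image is $Y$.

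The main obstacle is the first step: pinning down the local geometry of ``blow up $D$, then flop'' precisely enough to conclude that $\pi|_{\tilde D}$ is an isomorphism, hence to obtain the identity $(\tilde D+\tilde H)|_{\tilde D}\cong\mathcal{O}_{\tilde D}$. Once this adjunction identity is available, the section count, the base-point freeness, and the identification of the morphism are all routine.
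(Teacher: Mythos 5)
Your proof is correct, but it takes a genuinely different route from the paper's. The paper argues \emph{backwards}: it takes for granted that $|\tilde{D}+\tilde{H}|$ defines a morphism contracting $\tilde{D}$ to a point $p$, composes that morphism with the projection from $p$, observes that this composition is given by the subsystem of divisors containing $\tilde{D}$ as a component --- i.e.\ by $\tilde{D}+|\tilde{H}|$, hence by $|\tilde{H}|$ --- and concludes that the composition is the blow-down; since the projection from the unprojection point is by definition the inverse of the unprojection, the claimed factorization follows on the complement of $D$ and hence everywhere. You instead argue \emph{forwards}: you pin down the local geometry of the flop to get $\pi|_{\tilde D}$ an isomorphism and the adjunction identity $(\tilde D+\tilde H)|_{\tilde D}\cong\mathcal{O}_{\tilde D}$, compute $h^0(\tilde D+\tilde H)=n+2$ via the restriction sequence and Kawamata--Viehweg, exhibit the explicit basis $x_0\mathbf{1}_{\tilde D},\dots,x_n\mathbf{1}_{\tilde D},\tilde s$, and read off that the resulting morphism is $(x_0:\dots:x_n:s)\circ\pi=\varphi\circ\pi$. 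What your version buys is that it actually \emph{proves} the facts the paper only "observes" (base-point freeness, that $\tilde D$ is contracted, and that the image is $Y$), at the cost of being longer, of relying on the local description of "blow up $D$ then flop", and of quietly invoking linear normality and nondegeneracy of $X\subset\mathbb{P}^n$ in the count $h^0(\mathcal{O}_{\tilde X}(\tilde H))=n+1$; these are harmless here since the paper's standing hypothesis that $X$ and $D$ are projectively Gorenstein (needed anyway for the Kustin--Miller section $s$ to exist) covers them. Both arguments ultimately rest on the same pivot: the sections of $\tilde D+\tilde H$ vanishing on $\tilde D$ are exactly $\tilde D+|\tilde H|$, and there is precisely one more section, namely the unprojection variable $s$.
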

\begin{proof}
Observe that the morphism  $\varphi_{|\tilde{D}+\tilde{H}|}$ given
by the system $|\tilde{D}+\tilde{H}|$ on $\tilde{X}$ contracts
$\tilde{D}$. Denote the image of $D$ by $p$.  Let us consider the
composition of the morphism given by $|\tilde{D}+\tilde{H}|$ on
$\tilde{X}$ with the projection from $p$. This rational morphism
is given outside $\tilde{D}$ by the subsystem of
$|\tilde{D}+\tilde{H}|$ given by divisors with $D$ as component.
This implies that the considered composition is given outside $D$
by the system $\tilde{H}$, i.e. it is the blowing down of the
small resolution. We hence obtain the equality from the assertion
outside $D$ which implies equality everywhere.
\end{proof}

\end{section}

\begin{section}{Classical unprojections}
The classical examples of unprojections for which the result is
well described using equations concern the situation where the
unprojected variety (in our case $D$) is a complete intersection
of low codimension.

\subsection{codimension 2} \label{sec codim 2} Del Pezzo surfaces which are a codimension 2 complete
intersections in some weighted projective space can be naturally
embedded into singular Calabi--Yau threefolds which are
hypersurfaces in the same space. This is done in the following way.
Assume that $D$ is given by equations $q_1$,$q_2$ of (weighted) degrees $d_1$,
$d_2$
in some space $T$. Let $\mathcal{X}$ be the family of hypersurfaces of degree
$d > \operatorname{max}\{d_1,d_2\}$ in $T$. Then the variety $X_0$ given by the
equation $q_1 a_1+q_2 a_2=0$
is an element of $\mathcal{X}$ that contains $D$ for any $a_1$, $a_2$ of degrees
$d-d_1$, $d-d_2$ respectively. We are then in position of Example \ref{Ax-By}. 
Hence the Kustin--Miller unprojection corresponding to our data leads to a
codimension
2 complete intersection $Y_0$ in a cone over our starting space. 
As we observed in Remark \ref{factorizes} the inverse birational map factorizes
through 
the blow up $Z$ of the vertex $P$ of the cone and a birational contraction
morphism.
In the considered case for general $X_0$ the contraction morphism contracts
proper transforms of lines given by
$q_1(x)=a_1(x)=q_2(x)=a_2(x)=0$ in $\operatorname{Cone}(T)$ to nodes given by
the same equations in $T$. 
Now $Z$ is a crepant resolution of both $X_0$ and $Y_0$. The first being the
blow up of a singularity 
with tangent cone a del Pezzo surface the latter a small resolution of nodes. It
follows that $Z$ is a Calabi--Yau threefold 
(in particular it is smooth) connected by a geometric transition to any smooth
representative of $\mathcal{X}$ and any smoothing of $Y_0$.
In each of the cases below the smoothing of $Y_0$ is explicit and hence the
obtained pair of geometric transitions is an explicit geometric 
bitransition related to a Kustin-Miller unprojection.       
\begin{center}
\begin{longtable}{c|c|c|c}
$T$&del Pezzo & Calabi--Yau & result of unprojection\\
\hline \hline \endhead
 $T_6 \subset \mathbb{P}(1^2,2^2,3^2)$&$D_{2,3,6}\subset
\mathbb{P}(1^2,2^2,3^2)$&$X_{6,6}\subset
 \mathbb{P}(1^2,2^2,3^2)$&$Y_{3,4,6}\subset\mathbb{P}(1^3,2^2,3^2)$ \\ \hline
$T_6 \subset \mathbb{P}(1^3,2^2,3)$&$D_{1,2,6}\subset
\mathbb{P}(1^3,2^2,3)$&$X_{4,6}\subset\mathbb{P}(1^3,2^2,3)$&$Y_{2,3,6}
\subset\mathbb{P}(1^4,2^2,3)$\\
\hline $T_6 \subset \mathbb{P}(1^4,2,3)$& $D_{1,1,6}\subset
\mathbb{P}(1^4,2,3)$&$X_{3,6}\subset\mathbb{P}(1^4,2,3)$&$Y_{2,2,6}
\subset\mathbb{P}(1^5,2,3)$\\
\hline $T_4\subset \mathbb{P}(1^3,2^2,3)$&
$D_{4,2,3}\subset\mathbb{P}(1^3,2^2,3)$&
$X_{4,6}\subset\mathbb{P}(1^3,2^2,3)$&
$Y_{4,4,3}\subset\mathbb{P}(1^4,2^2,3)$\\
\hline $T_4\subset \mathbb{P}(1^4,2^2)$&
$D_{1,2,4}\subset\mathbb{P}(1^4,2^2)$&
$X_{4,4}\subset\mathbb{P}(1^4,2^2)$&
$Y_{2,3,4}\subset\mathbb{P}(1^5,2^2)$\\
\hline $T_4\subset \mathbb{P}(1^5,2)$&
$D_{1,1,4}\subset\mathbb{P}(1^5,2)$&
$X_{3,4}\subset\mathbb{P}(1^5,2)$&
$Y_{2,2,4}\subset\mathbb{P}(1^6,2)$\\
\hline $\mathbb{P}(1^4,2)$& $D_{2,3}\subset\mathbb{P}(1^4,2)$&
 $X_6\subset \mathbb{P}(1^4,2)$&
 $Y_{3,4}\subset \mathbb{P}(1^5,2)$\\
\hline $T_3\subset \mathbb{P}(1^5,2)$&
$D_{1,2,3}\subset\mathbb{P}(1^5,2)$&
 $X_{3,4}\subset \mathbb{P}(1^5,2)$&
 $Y_{2,3,3}\subset \mathbb{P}(1^6,2)$\\
\hline $T_3\subset \mathbb{P}^5$&
 $D_{1,1,3}\subset\mathbb{P}^5$&
$X_{3,3}\subset \mathbb{P}^5$&
 $Y_{2,2,3}\subset \mathbb{P}^6$\\
\hline $T_2\subset \mathbb{P}(1^5,3)$&
$D_{2,2,3}\subset\mathbb{P}(1^5,3)$&
 $X_{2,6}\subset\mathbb{P}(1^5,3)$&
$Y_{2,3,4}\subset \mathbb{P}(1^6,3)$\\
\hline $T_2\subset \mathbb{P}^5$&
 $D_{1,2,2}\subset\mathbb{P}^5$&
 $X_{2,4}\subset\mathbb{P}^5$&
$Y_{2,2,3}\subset \mathbb{P}^6$\\
\hline $T_{2,2}\subset \mathbb{P}^6$&
$D_{1,1,2,2}\subset\mathbb{P}^6$&
 $X_{2,2,3}\subset \mathbb{P}^6$&
 $Y_{2,2,2,2}\subset \mathbb{P}^7$\\
 \hline
$Pf\subset\mathbb{P}(1^7,2)$&
 $D_{1,2}\cap Pf \subset\mathbb{P}(1^7,2)$&
 $X_{4}\cap Pf \subset\mathbb{P}(1^7,2)$&
$Y_{2,3}\cap Pf \subset\mathbb{P}(1^8,2)$\\
\hline $Pf \subset\mathbb{P}^7$&
 $D_{1,1}\cap Pf \subset \mathbb{P}^7 $&
$X_{3}\cap Pf \subset \mathbb{P}^7$&
 $Y_{2,2}\cap Pf \subset \mathbb{P}^8$\\
\hline $\mathbb{P}(1^4,4)$&
 $D_{3,4}\subset \mathbb{P}(1^4,4)$ &
$X_{8}\subset \mathbb{P}(1^4,4)$&
 $Y_{4,5}\subset \mathbb{P}(1^5,4)$\\
\hline $\mathbb{P}(1^3,2,5)$&
 $D_{4,5}\subset \mathbb{P}(1^3,2,5)$ &
$X_{10}\subset \mathbb{P}(1^3,2,5)$&
 $Y_{5,6}\subset \mathbb{P}(1^4,2,5)$\\
 \hline $T_6\subset\mathbb{P}(1^4,2,3)$&
 $D_{1,1,6}\subset \mathbb{P}(1^4,2,3)$ &
$X_{3,6}\subset \mathbb{P}(1^4,2,3)$&
 $Y_{2,2,6}\subset \mathbb{P}(1^5,2,3)$\\
\hline $T_2\subset \mathbb{P}(1^5,3)$&
 $D_{2,2,3}\subset \mathbb{P}(1^5,3)$ &
$X_{2,6}\subset \mathbb{P}(1^5,3)$&
 $Y_{2,4,3}\subset \mathbb{P}(1^6,3)$\\
 \hline $T_4\subset \mathbb{P}(1^4,2)$&
 $D_{2,3}\subset \mathbb{P}(1^4,2)$ &
$X_{6}\subset \mathbb{P}(1^4,2)$&
 $Y_{3,4}\subset \mathbb{P}(1^5,2)$\\
\hline $\mathbb{P}^4$&
 $D_{3,1}\subset \mathbb{P}^4$ &
$X_{5}\subset \mathbb{P}^4$&
 $Y_{2,4}\subset \mathbb{P}^5$\\
\hline $\mathbb{P}^4$&
 $D_{2,2}\subset \mathbb{P}^4$ &
$X_{5}\subset \mathbb{P}^4$&
 $Y_{3,3}\subset \mathbb{P}^5$\\

\end{longtable}
\end{center}
The notation $Pf$ in the table stands for the variety given by $4 \times 4$
Pfaffians of a $5\times5$ matrix with general linear entries in the appropriate
space (it describes either a linear section or a cone over the Grassmannian
G(2,5) in it's Pl\"ucker embedding). Here all the varieties obtained as results
of the unprojections in the table are well known to admit smoothings, as the
general members with given description are known to be classical examples of
smooth Calabi--Yau threefolds.
\begin{rem} \label{nody} One can also use methods of \cite{G2} or direct
computation
with Macaulay 2 \cite{M2}, that
in all above cases the unprojection factorizes into a small resolution of nodes
and a primitive contraction of the del Pezzo surface as described in Lemma
\ref{projekcja jest zadana przez D+H}. The latter factorization is the same as
the one from Remark \ref{factorizes}.
\end{rem}

\subsection{codimension 3} Del Pezzo surfaces which are a
codimension 3 complete intersections in some weighted projective
space can be naturally embedded into appropriate singular Calabi--Yau
threefolds which are codimension 2 complete intersections. By
\cite{RP} the unprojection corresponding to such data leads us to
a variety given by $4 \times 4$ Pfaffians of a $5\times 5$ matrix with entries
of appropriate weight.
More precisely let $D$ be given by the equations $q_1$,$q_2$,$q_3$ of degrees
$d_1$,$d_2$,$d_3$ respectively. Let $\mathcal{X}$ be a family of Calabi--Yau
threefolds given as complete intersections of hypersurfaces of degrees $e_1$,
$e_2$ , where $\operatorname{min}\{e_1, e_2\}>\operatorname{max}\{d_1, d_2, d_3
\}$. Let $X_0$ be given by equations $q_1 a_1+q_2 a_2+q_3 a_3=0$, $q_1 b_1+q_2
b_2+q_3 b_3=0$, where for all $i\in\{1,3\}$ we have $a_i$, $b_i$ are generic of
degree $e_1-d_i$, $e_2-d_i$ respectively. The variety $Y_0$ given in the cone
over
$T$ by the Pfaffians of the matrix
\begin{displaymath}
\left( \begin{array}{ccccc}
 0& t & a_1 & a_2 & a_3 \\
-t& 0 & b_1 & b_2 & b_3 \\
-a_1& -b_1 & 0 & q_3 & -q_2 \\
-a_2& -b_2 & a_1 & 0 & q_1 \\
-a_3& -b_3 & a_1 & a_2 & 0
 \end{array}\right)
\end{displaymath}
is then a variety whose projection from $t=1$ (all other coordinates zero) is
$X_0$ and such that the exceptional divisor of the projection is given by
$q_1=0$,$q_2=0$,$q_3=0$. Here in the same way as before the blow up $Z$ of the
singular point is a crepant resolution of $Y_0$ and 
the morphism induced on $Z$ to $X_0$ is a small contraction to nodes of $X_0$.  
We hence obtain an explicit geometric bitransition in each of the following
examples.  
\begin{longtable}{c|c|c|c|c}
&$T$&del Pezzo & Calabi--Yau & result of unprojection\\
\hline \hline \endhead

\hline 1&$T_4 \subset\mathbb{P}(1^6,2)$&
 $D_{1,1,1,4}\subset\mathbb{P}(1^6,2)$&
 $X_{2,2,4}\subset\mathbb{P}(1^6,2)$&
 $Y_{4}\cap Pf \subset\mathbb{P}(1^7,2)$\\
\hline 2&$T_3 \subset \mathbb{P}^6$&
$D_{1,1,1,3}\subset\mathbb{P}^6$&
 $X_{2,2,3}\subset\mathbb{P}^6$&
 $Y_{3}\cap Pf \subset\mathbb{P}^7$\\
\hline
3&$\mathbb{P}^5$& $D_{1,2,2}\subset\mathbb{P}^5$& $X_{3,3}\subset\mathbb{P}^5$&
$WPf(1,2)\subset\mathbb{P}^6$\\
\hline 4&$T_{2,2} \subset \mathbb{P}^7$&
$D_{1,1,1,2,2}\subset\mathbb{P}^7$&
 $X_{2,2,2,2}\subset \mathbb{P}^7$&
 $Y_{2,2}\cap Pf \subset \mathbb{P}^8$\\
\hline 5&$Pf \subset \mathbb{P}^8$&
 $D_{1,1,1}\cap Pf \subset\mathbb{P}^8$&
 $X_{2,2}\cap Pf \subset \mathbb{P}^8$&
 $Pf \cap Pf\subset \mathbb{P}^9$\\
 \hline 6&$\mathbb{P}(1^5,2)$&
 $D_{2,2,2}\subset\mathbb{P}(1^5,2)$&
 $X_{3,4}\subset \mathbb{P}(1^5,2)$&
 $WPf(1,2)\subset \mathbb{P}(1^6,2)$\\
 \hline 7&$\mathbb{P}(1^4,2^2)$&
 $D_{3,2,2}\subset\mathbb{P}(1^4,2^2)$&
 $X_{4,4}\subset \mathbb{P}(1^4,2^2)$&
 $WPf(1,2,3)\subset \mathbb{P}(1^5,2^2)$\\

\end{longtable}

Let us look more precisely at the examples from the table. The result of the
unprojections in the examples 2 and 4 are well known families of Calabi--Yau
threefolds. These are complete intersections in the Grassmannian $G(2,5)$. The
example number 3 is also well known and described by $4\times 4$ Pfaffians of a
$5\times 5$ matrix with one row of quadrics and remaining entries linear. The
example 1 is the double cover of the Fano threefold $B_5$ obtained as a
codimension 3 linear section of $G(2,5)$ branched over the intersection of this
Fano variety with a quartic (see \cite{Borc}). The example number 5 was found in
\cite{G2}, it is a Calabi--Yau threefold with Picard number one. We find it's
explicit description as a complete intersection of two Grassmannians embedded in
$\mathbb{P}^9$ by different Pl\"ucker embeddings. The examples 5 and 6 are new
examples of Calabi--Yau threefolds with Picard number one. Calabi--Yau
threefolds
with such invariants have a predicted Picard-Fuchs equation of their mirror in
\cite{TableVS}.

\begin{ex}
The example 6 is described by Pfaffians of a generic antisymmetric $5\times5$
matrix with entries of the following degrees:
$$\left(\begin{array}{cccc}
1&1&1&1\\
& 2&2&2\\
&  &2&2\\
& & &2
\end{array}\right)
$$ in the weighted projective space $\mathbb{P}(1^6,2)$. To prove that these are
smooth varieties we use the computer algebra system Macaulay 2 on a specific
example. The hodge numbers can be computed directly from the above description.
We can also proceed as in
\cite[Thm 2.2]{G2}. We need only to observe that the general $X_4$ 
containing a del Pezzo surface $D_{2,2,2}\subset \mathbb{P}(1^5,2)$ is smooth
and the generic such 
$X_{3,4}\subset \mathbb{P}(1^5,2)$ has 28 nodes. 
We obtain that $h^{1,1}(Y)=1$ and $h^{1,2}=60$. Hence the Euler characteristic
$\chi(Y)=-116$.
The remaining invariants follow from Lemma \ref{niezmienniki} and are
$H_Y^3=10$, $\dim(|H_Y|)=6$ and $c_2(Y).H=52$.

\end{ex}
\begin{ex}
The example 7 is described by Pfaffians of a generic antisymmetric $5\times 5$
matrix with entries of the following degrees:
$$\left(\begin{array}{cccc}
1&1&2&2\\
& 1&2&2\\
&  &2&2\\
& & &3
\end{array}\right)
$$in the weighted projective space $\mathbb{P}(1^5,2^2)$. Observe that it is a
section of a cone (with vertex a line) 
over a weighted Grassmannian $wG(2,5)\subset \mathbb{P}(1^3,2^6,3)$ with weights
 $(\frac{1}{2},\frac{1}{2},\frac{1}{2},\frac{1}{2},\frac{3}{2})$ (see
\cite{wGr}) by a generic cubic and 4 generic quadrics.   
Analogously as before we prove that these are Calabi--Yau threefolds with Picard
number one, Euler
characteristic $\chi(Y)=-120$, $h^{1,2}=62$,$H_Y^3=7$, $\dim(|H_Y|)=5$ and
$c_2(Y).H=46$.
\end{ex}

\begin{rem}\label{uwagavs}
As we are given explicit descriptions of examples 5, 6, 7 it would be
interesting to check whether the Picard-Fuchs equations of their mirrors are
indeed those found in \cite{TableVS}. In fact examples 1 and 3 have also only
conjectured corresponding equations. One might try to proceed as in \cite{B}
using above geometric transitions instead of conifolds. What is additionally
interesting is that the Calabi--Yau equations that by \cite[Table 1.]{TableVS} 
should correspond to examples 6 and 7 (they produce the same invariants
including the Euler Characteristic)
produce some non-integral elliptic instantons $n^1_d$. This would contradict
the conjecture concerning their integrality.
\end{rem}

\begin{ex} We might have also considered the following unprojection triple.
$T_6 \subset\mathbb{P}(1^5,2,3)$, $D_{1,1,1,6}\subset\mathbb{P}(1^5,2,3)$,
 $X_{2,2,6}\subset\mathbb{P}(1^5,2,3)$.
The result of such an unprojection should be of the form
 $Y_{6}\cap Pf \subset\mathbb{P}(1^6,2,3)$.
We however observe that such a variety is always singular and it is not clear
whether it admits a smoothing.
\end{ex}

\subsection{Tom and Jerry} Del Pezzo surfaces which are
codimension 4 complete intersections can naturally be embedded
into Pfaffian threefolds in two ways called Tom and Jerry. We find two examples 
of Calabi--Yau varieties that might fit to this construction. Both are described
by Pfaffians of matrices of linear forms in some spaces $T$. Then, the del Pezzo
surfaces that we might try to unproject are given as linear sections of $T$ as
shown in the table below.
\begin{longtable}{c|c|c|c|c}
$T$&del Pezzo & Calabi--Yau & result of Tom & result of Jerry\\
\hline \hline \endhead
 $T_{2,2}\subset \mathbb{P}^8$&
 $D_{1,1,1,1,2,2}\subset\mathbb{P}^8$&
 $Pf \cap X_{2,2}\subset \mathbb{P}^8$&
 $Tom \cap Y_{2,2}\subset \mathbb{P}^9$& $Jerry \cap Y_{2,2}\subset
 \mathbb{P}^9$\\
 \hline
\hline $Pf \subset \mathbb{P}^9$&
 $D_{1,1,1,1}\cap Pf\subset\mathbb{P}^9$&
 $Pf \cap Pf\subset \mathbb{P}^9$&
 $Tom \cap Pf\subset \mathbb{P}^{10}$&
$Jerry \cap Pf \subset \mathbb{P}^{10}$\\
\end{longtable}
Both cases are in fact sections of standard constructions called original Tom
and original Jerry by different spaces $T$.
More precisely $D$ is given as a complete linear section of codimension 4 in $T$
(write $l_1,..,l_4$ for its defining linear forms). The Calabi--Yau varieties
from the family are described by $4\times 4$ Pfaffians of linear forms of a
$5\times5$ matrix with linear entries. Consider the varieties $Y_1$ given by
$4\times 4$ Pfaffians of the matrix
$$\left(\begin{array}{ccccc}
0&h_1&h_2&h_3&h_4\\
-h_1&0&0&l_1&l_2\\
-h_2&0&0&l_3&l_4\\
-h_3&-l_1&-l_3&0&0\\
-h_4&-l_2&-l_4&0&0
\end{array}\right)
$$
and $Y_2$ given by $4\times 4$ Pfaffians of the matrix
$$\left(\begin{array}{ccccc}
0&l_1&l_2&l_3&0\\
-l_1&0&0&l_3&l_4\\
-l_2&0&0&h_1&h_2\\
-l_3&-l_3&-h_1&0&h_3\\
0&-l_4&-h_2&-h_3&0
\end{array}\right).
$$
Both these varieties contain $D$. One checks easily that the result of these
standard unprojections are cones over
the Segre embeddings of $\mathbb{P}^2\times \mathbb{P}^2$ for Tom and
$\mathbb{P}^1\times \mathbb{P}^1 \times \mathbb{P}^1$ for Jerry. Hence the
results of the unprojections are intersections of these cones with the
corresponding space $T$.  It follows that in each case 
the blow up of the singularity of the variety $Y$ is a crepant resolution with
exceptional 
divisor a del Pezzo surface of degree 6.  
Observe that although these varieties admit explicit smoothings, the smoothed
varieties do not have
Picard number 1.

\end{section}

\begin{section}{Codimension 3} In all above examples we restricted to the case
where the del Pezzo surface $D$ is given as a complete intersection in the space
$T$. However, for the Kustin--Miller unprojection to exist it is only needed
that
both $D$ and $X$ are Gorenstein. Below we deal with the general case in
codimension 3. We have the following proposition.
\begin{prop} Let $D$ be given by the $n-1\times n-1$ Pfaffians of
an $n\times n$ antisymmetric matrix $M$ of linear forms in some projective
space. Let
$X$ be a complete intersection of two hypersurfaces $h_1$, $h_2$ containing
$D$. Then there is a variety $Y$ given by $n+1\times n+1$ minors
of an $n+2\times n+2$ antisymmetric matrix of linear forms, which is the
unprojection of $D$ in $X$.
\end{prop}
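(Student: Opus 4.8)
The plan is to obtain $Y$ from the Kustin--Miller unprojection already available under our hypotheses and then to identify its equations by the Buchsbaum--Eisenbud structure theorem. First I would check the input of \cite[Thm 5.2]{Reid 2}: $D$ is projectively Gorenstein of codimension $3$, since the maximal (size $n-1$) Pfaffians of the odd-size antisymmetric matrix $M$ of linear forms resolve $\mathcal{O}_D$ through the Buchsbaum--Eisenbud complex, and $X$ is projectively Gorenstein of codimension $2$, being a complete intersection; in the relevant situation ($D$ a del Pezzo surface in its anticanonical embedding, $X$ a Calabi--Yau threefold) one has $k_D=-1<0=k_X$, so the hypothesis $k_X>k_D$ holds and there is a section $s$ with poles along $D$ giving the unprojection $\varphi\colon X\dashrightarrow Y\subset\operatorname{Cone}(T)$ whose inverse is the projection from $P_s$ contracting $D$. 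Since $\dim Y=\dim X$ while $Y$ lies in an ambient space of one more dimension, $Y$ is projectively Gorenstein of codimension $3$.

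By the Buchsbaum--Eisenbud structure theorem, every projectively Gorenstein variety of codimension $3$ is cut out by the maximal Pfaffians of an antisymmetric matrix whose size is the (necessarily odd) minimal number of generators $\mu(I_Y)$ --- equivalently, for the reduced variety $Y$, by the submaximal minors of that matrix. Thus it remains to show that $\mu(I_Y)=n+2$ and that the matrix may be chosen with linear entries. For the count I would invoke the Kustin--Miller complex of \cite{Reid 2,RP}: since $X$ is a complete intersection, $I_X=(h_1,h_2)\subseteq I_D$, and lifting the quotient map $R/I_X\to R/I_D$ to a comparison map between the Koszul complex of $(h_1,h_2)$ (Betti numbers $1,2,1$) and the Buchsbaum--Eisenbud resolution of $\mathcal{O}_D$ (Betti numbers $1,n,n,1$) produces, after the Kustin--Miller gluing, a self-dual length-$3$ resolution of $\mathcal{O}_Y$ with Betti numbers $1,\,n+2,\,n+2,\,1$; hence $I_Y$ is minimally generated by $n+2$ elements and the associated antisymmetric matrix $N$ has size $n+2$. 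Specialising $n=3$ recovers the $5\times 5$ matrix displayed earlier.

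It remains to make $N$ explicit and to see that it can be chosen with linear entries. The comparison map records a $2\times n$ matrix $A$ with $h_i=\sum_j A_{ij}\operatorname{Pf}_j(M)$, together with a compatibility vector coming from the top of the Koszul complex; assembling $s$, the two rows of $A$, and the block $M$ in the evident antisymmetric pattern (exactly as the displayed $5\times 5$ matrix is built when $n=3$) yields an $N$ whose maximal Pfaffians are the Pfaffians of $M$ together with the two unprojection equations $s\,h_i=g_i$, so that they generate $I_Y$. I expect the main obstacle to be this final verification: one must confirm that the Pfaffians of the assembled $N$ cut out \emph{exactly} the unprojection ideal --- not merely a subideal or a locus with the same support --- and carry the degree bookkeeping (the degrees of the $A_{ij}$ against $\deg h_i$ and the weights of the ambient space) through so that $N$ is genuinely a matrix of linear forms. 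Everything else is a direct application of the structure theorem and of the Kustin--Miller construction recalled above.
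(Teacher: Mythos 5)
Your proposal is correct and ends at exactly the matrix the paper writes down, but it reaches it by a longer route. The paper's entire proof is the last third of yours: write $h_1=\sum a_i f_i$, $h_2=\sum b_i f_i$ where $f_1,\dots,f_n$ are the Pfaffians of $M$, border $M$ with the two coefficient vectors and the unprojection variable $t$ to form the antisymmetric $(n+2)\times(n+2)$ matrix $N$, and observe that its maximal Pfaffians define $Y$. Everything you derive from the Buchsbaum--Eisenbud structure theorem and the Betti-number count $1,\,n+2,\,n+2,\,1$ of the Kustin--Miller complex is true and correctly predicts the size of $N$, but the paper does not need it: once $N$ is written down, the structure theorem is bypassed. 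The ``main obstacle'' you flag at the end --- that the Pfaffians of $N$ generate exactly the unprojection ideal --- dissolves under direct computation: the Pfaffian of $N$ obtained by deleting the first (resp.\ second) row and column is $h_2$ (resp.\ $h_1$), while deleting the $(i+2)$-nd row and column gives $t f_i - g_i$ with $g_i$ the corresponding $2\times 2$ minor of the $2\times n$ coefficient matrix; these $n+2$ elements are precisely the generators $I_X + (t f_i - g_i)$ of the Papadakis--Reid unprojection ideal, so no separate appeal to the structure theorem is required. Your remark about degree bookkeeping is fair --- the statement's ``linear forms'' implicitly assumes $\deg h_j = \deg f_i + 1$ and $\deg t = k_X - k_D = 1$, which holds in the del Pezzo/Calabi--Yau setting of the paper (e.g.\ $D_5\subset X_{3,3}$ with quadric Pfaffians and cubic $h_j$) but is left tacit in both your write-up and the paper's. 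What your approach buys is the a priori knowledge that \emph{some} Pfaffian description of size $n+2$ must exist; what the paper's buys is brevity and an explicit presentation in one step.
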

\begin{proof}
 Let us denote the Pfaffians of $M$ by $f_1, \dots, f_{n}$. We can then write
$h_1= a_1 f_1 + \dots + a_{n} f_{n}$, $h_2= b_1 f_1 + \dots + b_{n}
f_{n}$.
Let $t$ be the additional variable in the cone over $T$. Consider the matrix
\begin{displaymath}
N=\left( \begin{array}{ccc}
 0& t & \begin{array}{ccc} a_1 & \dots & a_{n} \end{array}\\
-t& 0 & \begin{array}{ccc} b_1 & \dots & b_{n} \end{array}\\
\begin{array}{c} a_1\\ \vdots \\ a_{n} \end{array}& \begin{array}{c} b_1\\
\vdots \\ b_{n} \end{array}&\text{\Huge{M}}
\end{array}\right)
\end{displaymath}

The $n+1 \times n+1$ Pfaffians of $N$ define a variety $Y$ whose projection from
$t=1$ (all coordinates in $T$ being 0) is the variety $X$ and the exceptional
divisor of the projection is $D$.
\end{proof}
\begin{ex} By the above we recover the construction from \cite{GMP1} of the
geometric bitransition of a del Pezzo of degree 5 in the Calabi--Yau threefold
$X_{3,3}\subset\mathbb{P}^5$, obtaining the Calabi--Yau variety given by
$6\times
6$ Pfaffians of a generic $7\times7$ antisymmetric matrix.
\end{ex}
\begin{rem}
There are also other examples were the unprojection may be performed explicitly
in a general context. For instance the constructions contained in \cite{GMk} are
Kustin-Miller unprojections with explicit descriptions of the resulting 
variety. All these may be performed in pure algebraic terms and hence be
extended to a more general context. For instance one may formulate a cascades of
unprojections involving such determinantal varieties of any size with additional
weights some of which might also give constructions of Calabi--Yau threefolds.
\end{rem}
\end{section}
\begin{section}{Linkage}
Unfortunately in higher codimension there isn't any general
description of the unprojection. However, if $D$ is a Del
Pezzo surface embedded into a complete intersection nodal
Calabi--Yau threefold $X$ we can construct a linkage between a
general hyperplane section of the variety $Y$ and the
del Pezzo surface $D$.

Indeed, we see that taking a section of $X$ given by a general
hypersurface $K$ of degree $k$ containing $D$. We obtain $X\cap K=
D\cup S$ for some surface $S\in |kH-D|$. Next, taking a generic
hypersurface $L$ of degree $K+1$ containing $S$ we get $X\cap
L=D\cup G$, where G is a surface from the system $|D+H|$. Hence by
Proposition \ref{projekcja jest zadana przez D+H} (or directly from the
existence of Kustin-Miller unprojection) it is isomorphic to a general
hyperplane section of $Y$.

The problem with the above approach is that in this way we find a
description of the resolution of the singular variety which is the
result of the unprojection. But a priori there is no reason for
the smoothed variety to have the same resolution. In the examples we dealt with
so far the resolution of the smoothing was always of the same form as the
resolution of
the result of the unprojection.
This did not follow however from a general principle but from the simplicity of
the construction which implied that we were always given a natural family with
given resolution and whose generic member is smooth.
\subsection{Unprojections of Del Pezzo surfaces of degree 6}
The case $D$ is a del Pezzo surface of degree 6 is somewhat special in this
context. This is because as it was proven in \cite{GMP1} the singular variety
obtained by such an unprojection is expected to have two different smoothing
families. We shall see that in this case although the smoothings are different,
their projective resolutions both have the same Betti numbers. Indeed, let $D$
be a del Pezzo surface of degree 6.
It is a classical fact that the del Pezzo surface $D \subset\mathbb{P}^6$ has
two different descriptions. The first sees it as a codimension two complete
linear section of the Segre embedding of $\mathbb{P}^2\times\mathbb{P}^2\subset
\mathbb{P}^8$, whilst the second interprets it as a hyperplane section of the
Segre embedding of $\mathbb{P}^1\times\mathbb{P}^1\times\mathbb{P}^1\subset
\mathbb{P}^7$. Now, instead of looking at the generic hyperplane sections of the
constructed variety $Y$, we might consider our unprojection as a hyperplane
section
of a higher dimensional unprojection involving one of these Segre embedded
products.
In order to perform the above reasoning we need to extend our Calabi--Yau
threefold $X$ as a linear section of an appropriate Fano variety. This is easily
done if the Calabi--Yau threefold is a complete intersection. We just consider
the generic complete intersection of the same kind in the higher-dimensional
space.
 We can hence perform two constructions. The first will be the unprojection of a
Fano fivefold containing $\mathbb{P}^2\times\mathbb{P}^2\subset \mathbb{P}^8$.
The second will be the unprojection of a Fano fourfold containing
$\mathbb{P}^1\times\mathbb{P}^1\times\mathbb{P}^1\subset \mathbb{P}^7$.
We see $Y$ in both pictures as a hyperplane section passing through the singular
point. Now reasoning as in \cite[Thms 2.1, 2.2]{G2} we prove that the results of
the unprojections in both cases have isolated singularities.
As the they are also irreducible their generic hyperplane sections are smooth
and define two smoothing families.
One smoothing of $Y$ will be the general hyperplane section of the constructed
fourfold, whilst the second smoothing will be a general codimension 2 linear
section of the fivefold. The smoothing inducing different local pictures of the
smoothed singularity have to be different.
Let us perform computations on an explicit example.
\begin{ex} Consider the unprojection of $D_6$ in a Calabi--Yau threefold
$X_{2,2,3}\subset \mathbb{P}^6$. We can proceed in two ways. The first is to
take the variety $D=\mathbb{P}^2\times\mathbb{P}^2\subset\mathbb{P}^8$ in it's
Segre embedding and a generic variety of type
$\bar{X}_{2,2,3}\subset\mathbb{P}^8$ containing $D$. By the above discussion the
smoothing of the result of the unprojection of $D_{6}$ in $X_{2,2,3}$ is
connected by a linkage in $\bar{X}_{2,2,3}$ with $D\cap H\subset\mathbb{P}^7$.
In particular the resolution is described by a mapping cone construction applied
twice. First, on the resolutions of $D\cap H$ and the Koszul complex of
$X_{2,2,3}\cap H\cap Q$, next on the result of this and the Koszul complex of
$X_{2,2,3}\cap H\cap C$ in the second step. The Betti diagram of the resolution
is then as follows
$$\begin{array}{ccccc}

1&0&0&0&0\\
0&2&0&0&0\\
0&8&18&8&0\\
0&0&0&2&0\\
0&0&0&0&1
\end{array}
.$$
The second possibility is to take the variety
$D=\mathbb{P}^1\times\mathbb{P}^1\times\mathbb{P}^1\subset\mathbb{P}^7$ with
$\bar{X}_{2,2,3}\subset\mathbb{P}^7$ containing $D$. Again we construct a
linkage and end up with a smoothing with resolution having the same Betti
diagram.
\end{ex}
\begin{rem}
In general it is hard to distinguish between the two smoothings of a Calabi--Yau
threefold with a singularity which is a cone over the del Pezzo surface $D_6$.
The existence of both smoothings is proved first in the local context and then
extended to the global case using formal cohomology. Here thanks to limiting
ourselves to unprojections we  extend the local Picture to the global case as
described above. Both smoothings appear as sections of projective varieties
constructed as higher dimensional unprojections of
$\mathbb{P}^1\times\mathbb{P}^1\times\mathbb{P}^1$ or
$\mathbb{P}^2\times\mathbb{P}^2$.
\end{rem}
\begin{rem}
We can perform a similar construction in other cases. It surely works for other
del Pezzo surfaces, but it might also be successful when the Calabi--Yau
threefolds are not complete intersections. In the latter cases the construction
of the Fano fourfolds extending our Calabi--Yau threefolds is much more
delicate.
However, once we manage to construct the higher dimensional unprojection in such
a way that it ends up in a variety with isolated singularities our smoothing
will be obtained as a general section of the constructed variety. The linkage
construction then, gives us a more or less explicit description of the smoothed
variety by giving its resolution.
\end{rem}
 \end{section}
\begin{section}{Complete intersections in homogeneous spaces}
In this section we consider classical examples of Calabi--Yau
threefolds which are complete intersections in homogeneous
varieties (see \cite{Borc}). We prove that we can construct the
following cascade of unprojections with a del Pezzo surface of degree $4$ at
each step.
\begin{longtable}
 {|c|c|c|c|c|}\hline
            $H^3$&$\chi$&$c_2\cdot H$&$dim|H|$&Description        \\\hline\hline
      \endhead
 4 &-256&52&5&   $X_{2,6}\subset\mathbb{P}(1,1,1,1,1,3)$         \\ \hline
 8 &-176&56&6&   $X_{2,4}\subset\mathbb{P}^5$                   \\ \hline

 12&-144&60&7&   $X_{2,2,3}\subset\mathbb{P}^6$                  \\ \hline

 16&-128&64&8&   $X_{2,2,2,2}\subset\mathbb{P}^7$                \\ \hline

20&-120&68&9&   $X_{1,2,2}\subset G(2,5)$                     \\
\hline

 24&-116&72&10&  $X_{1,1,1,1,1,1,2}\subset S_{10}$               \\ \hline

 28&-116&76&11&  $X_{1,1,1,1,2}\subset G(2,6)$                   \\ \hline

 32&-116&80&12&  $X_{1,1,2}\subset LG(3,6)$                     \\ \hline

  36&-120&84&13&  $X_{1,2}\subset G_2$                            \\ \hline

 \end{longtable}
Here $LG(3,6)$ denotes the Lagrangian Grassmannian,  $S_{10}$ stands for the
orthogonal Grassmannian parameterizing four-dimensional spaces in a quadric of
maximal rank in $\mathbb{P}^9$, and $G_2$ is the variety parameterizing
isotropic
5-spaces of a general 4-form in $\mathbb{C}^7$. The varieties $X_2\subset
G(2,5)$, $S_{10}$, $G(2,6)$, $LG(3,6)$ and $G_2$ are commonly called Mukai
varieties and denoted alternatively by $M_6,\dots ,M_{10}$ respectively.
\begin{rem}
Observe that we saw already three possibilities of unprojecting a del Pezzo
surface of degree 4 in a degeneration of a Calabi--Yau threefold corresponding to the
family  $X_{2,2} \subset G(2,5)$. These were Tom and Jerry and the middle part
of the cascade. However, only the last one leads us to a Calabi--Yau threefold
with Picard group of rank one.
\end{rem}

Up to the case $D_4 \subset X_{1,2,2}\cap G(2,5)$ the unprojections are standard
examples discussed above. To proceed with the other cases we again increase the
dimension of the pictures and use slightly more general theorems that can be
found in \cite{RI,R,Kaskmuk}.
It is proved there that a general nodal proper linear section of a Mukai variety
$M_{g}$ projected from its node is a proper linear section of $M_{g-1}$.  

Having this we construct needed unprojections as sections of these higher
dimensional unprojections by an appropriate number of hyperplanes and a quadric
cone. We construct in this way the whole cascade.
To obtain geometric bitransitions we need to prove that in each case there is an
intermediate Calabi--Yau threefold $Z$ with morphisms 
factorizing the unprojection. It will again just be the blow up of the
singularity of the projected threefold. Its tangent
cone is isomorphic to some generic complete intersection of two quadric cones,
hence 
to a cone over the del Pezzo surface of degree 4. The blow up of this
singularity 
is a crepant resolution and the projection factorizes through it and a
contraction morphism, which can easily be checked to be a small contraction.

\begin{rem} In the same way as in Remark \ref{uwagavs}, the mirrors of the three
examples of degrees 24, 32, and 36 are not known, but have a conjectured
Picard-Fuchs equation. One may try to construct the mirrors using the
constructed geometric transitions.
\end{rem}

\end{section}

\begin{section}*{Acknowledgements}
Most results of the paper were obtained while the author visited the
University of Oslo supported by an EEA Scholarship and Training fund
in Poland and by MNiSW grant N N201 388834. The project is also partially supported by 
SNF, No 200020-119437/1. I would like to thank G. Kapustka,
K. Ranestad and A. Lutken for enlightening discussions and remarks. I would like
also 
to thank the referee for precious comments, major corrections, and suggestions of subjects for future investigation.
\end{section}

\bigskip
\begin{minipage}{10cm}
Institute of Mathematics of the Jagiellonian University, 
ul. \L ojasiewicza 6,\\
30-348 Krak\'ow.\\
michal.kapustka@uj.edu.pl
\end{minipage}
\end{document}